\newtheorem{theorem}{Theorem}[section]
\newtheorem*{maintheorem}{Main Theorem}
\newtheorem{proposition}[theorem]{Proposition}
\newtheorem{lemma}[theorem]{Lemma}
\newtheorem{corollary}[theorem]{Corollary}
\theoremstyle{definition}
\newtheorem{example}[theorem]{Example}
\theoremstyle{remark}
\newtheorem*{remark}{Remark}
\numberwithin{equation}{section}
\begin{document}

\title{The Farrell-Jones Conjecture for fundamental groups of graphs of abelian groups}

\author{Giovanni Gandini}
\address{K{\o}benhavns Universitet, Institut for Matematiske Fag, Universitetsparken 5, 2100 K{\o}benhavn \O, Denmark}
\email{ggandini@math.ku.dk}
\urladdr{http://www.math.ku.dk/~zjb179}

\author{Sebastian Meinert}
\address{Freie Universit\"at Berlin, Institut f\"ur Mathematik, Arnimallee 7, 14195 Berlin, Germany}
\email{sebastian.meinert@fu-berlin.de}
\urladdr{http://userpage.fu-berlin.de/meinert}

\author{Henrik R\"uping}
\address{Rheinische Friedrich-Wilhelms-Universit\"at Bonn, Mathematisches Institut, Endenicher Allee 60, 53115 Bonn, Germany}
\email{henrik.rueping@hcm.uni-bonn.de}
\urladdr{http://www.math.uni-bonn.de/people/rueping}

\subjclass[2010]{Primary 18F25; Secondary 19A31, 19B28, 19G24}
\keywords{Farrell-Jones Conjecture, algebraic K- and L-theory of group rings, fundamental groups of graphs of abelian groups, generalized Baumslag-Solitar groups, Baumslag-Solitar groups}

\begin{abstract}
We show that the Farrell-Jones Conjecture holds for fundamental groups of graphs of groups with abelian vertex groups. As a special case, this shows that the conjecture holds for generalized Baumslag-Solitar groups.
\end{abstract}

\maketitle

\section{Introduction}

Denote by $\mathfrak{C}$ the class of groups that satisfy the K- and L-theoretic Farrell-Jones Conjecture with finite wreath products (with coefficients in additive categories) with respect to the family of virtually cyclic subgroups. Farrell and Wu \cite{FW13} showed that the conjecture holds for the solvable Baumslag-Solitar groups, and Wegner \cite{W13} generalized their proof to show that the conjecture in fact holds for all solvable groups.

Let $(\Gamma,\mathcal{G})$ be a finite graph of finitely generated abelian groups. We construct a group homomorphism $\phi$ from $\pi_1(\Gamma,\mathcal{G})$ to a semidirect product $\mathbb{Q}^m \rtimes F_n$, where $F_n$ denotes the free group of rank $n$. Wegner's result implies that $\mathbb{Q}^m\rtimes F_n$ lies in $\mathfrak{C}$ (Corollary~\ref{corollary: rational-by-free groups}). Then, given a torsion-free cyclic subgroup $C\leq\mathbb{Q}^m\rtimes F_n$ we show that its preimage $\phi^{-1}(C)\leq\pi_1(\Gamma,\mathcal{G})$ is a directed colimit of CAT(0)-groups and hence lies in $\mathfrak{C}$. Together with inheritance properties of $\mathfrak{C}$ (Theorem~\ref{theorem: inheritance properties}) and a sequence of colimit arguments this proves the following:

\begin{maintheorem}
Let $(\Gamma,\mathcal{G})$ be a graph of abelian groups. Then $\pi_1(\Gamma,\mathcal{G})$ lies in $\mathfrak{C}$.
\end{maintheorem}

Here, we do not require $\Gamma$ to be finite or countable, and we do not make any assumptions on the cardinality of the generating sets of the groups of $\mathcal{G}$.

A \emph{generalized Baumslag-Solitar group} is the fundamental group of a finite graph of infinite cyclic groups.

\begin{corollary}
All generalized Baumslag-Solitar groups, and in particular all Baumslag-Solitar groups, lie in $\mathfrak{C}$.
\end{corollary}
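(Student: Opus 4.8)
The plan is to deduce this immediately from the Main Theorem by checking that generalized Baumslag-Solitar groups fall within its scope. First I would unwind the definition given just above: a generalized Baumslag-Solitar group is by definition $\pi_1(\Gamma,\mathcal{G})$ for a finite graph $\Gamma$ of infinite cyclic groups. Since every infinite cyclic group is isomorphic to $\mathbb{Z}$ and hence abelian, such a pair $(\Gamma,\mathcal{G})$ is in particular a graph of abelian groups. The Main Theorem therefore applies verbatim and yields $\pi_1(\Gamma,\mathcal{G})\in\mathfrak{C}$, which is the first assertion.

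For the ``in particular'' clause I would exhibit each classical Baumslag-Solitar group $BS(m,n)=\langle a,t\mid t a^m t^{-1}=a^n\rangle$ as a generalized Baumslag-Solitar group. Concretely, $BS(m,n)$ is the HNN extension of the infinite cyclic group $\langle a\rangle\cong\mathbb{Z}$ with associated subgroups $m\mathbb{Z}$ and $n\mathbb{Z}$ identified via the stable letter $t$. Equivalently it is $\pi_1(\Gamma,\mathcal{G})$, where $\Gamma$ is the graph with a single vertex and a single loop edge, the vertex and edge groups are all infinite cyclic, and the two edge inclusions are multiplication by $m$ and by $n$. This is manifestly a finite graph of infinite cyclic groups, so $BS(m,n)$ is a generalized Baumslag-Solitar group and the previous paragraph gives $BS(m,n)\in\mathfrak{C}$.

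There is no genuine obstacle here: the entire difficulty of the corollary is already contained in the Main Theorem, and once the definitions are unwound the statement is immediate. The only points requiring (trivial) verification are that infinite cyclic groups are abelian and that the finiteness hypothesis on $\Gamma$ built into the definition of a generalized Baumslag-Solitar group is harmless, since the Main Theorem imposes no such restriction.
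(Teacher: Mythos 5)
Your proposal is correct and matches the paper's (implicit) argument exactly: the corollary is an immediate consequence of the Main Theorem once one observes that infinite cyclic groups are abelian, and your realization of $BS(m,n)$ as the fundamental group of a single-vertex, single-loop graph of infinite cyclic groups is precisely the paper's own Example in Section~3. Nothing is missing.
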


\begin{remark}
Farrell and Wu have informed us about a recent independent result of theirs which proves the Farrell-Jones Conjecture for Baumslag-Solitar groups.
\end{remark}

\subsubsection*{Acknowledgements}The first author was supported by the Danish National Research Foundation (DNRF) through the Centre for Symmetry and Deformation. The second author was supported by the Deutsche Forschungsgemeinschaft (DFG) through the Berlin Mathematical School (BMS). The third author was supported by the Leibniz Prize of Prof. Dr. Wolfgang L{\"u}ck granted by the Deutsche Forschungsgemeinschaft.

We would like to thank Dieter Degrijse for his helpful comments.

\section{Facts about the Farrell-Jones Conjecture}

The proof of the main theorem will rely on previously known cases of the conjecture and on inheritance properties. The following list is not complete; it highlights results that will be made use of in the present paper.

\begin{theorem}[\cite{LR04},\cite{BFL11},\cite{BL12},\cite{W12},\cite{W13}]
\label{theorem: inheritance properties}
The class $\mathfrak{C}$ has the following properties:
\begin{enumerate}
\item\label{inheritance: CAT(0)} CAT(0)-groups lie in $\mathfrak{C}$.
\item\label{inheritance: solvable} Virtually solvable groups lie in $\mathfrak{C}$.
\item\label{inheritance: subgroups} The class $\mathfrak{C}$ is closed under taking subgroups.
\item\label{inheritance: colimits} The class $\mathfrak{C}$ is closed under taking directed colimits.
\item\label{inheritance: homomorphism} Let $f\colon G\to H$ be a group homomorphism and assume that $H$ lies in $\mathfrak{C}$ and $f^{-1}(C)$ lies in $\mathfrak{C}$ for any torsion-free cyclic subgroup $C$ of $H$. Then $G$ lies in $\mathfrak{C}$.
\end{enumerate}
\end{theorem}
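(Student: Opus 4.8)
The statement collects results that are by now standard, so the plan is not to prove anything from scratch but to reduce each item to the appropriate source, being careful that everything is invoked in the \emph{strong} form built into the definition of $\mathfrak{C}$: a group $G$ lies in $\mathfrak{C}$ precisely when, for every finite group $F$, the wreath product $G\wr F$ satisfies both the $K$- and the $L$-theoretic Farrell-Jones Conjecture with coefficients in an arbitrary additive category, relative to the family of virtually cyclic subgroups. I would separate the two genuinely geometric inputs, (1) and (2), from the purely formal inheritance statements (3)--(5).

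For (1), I would cite Bartels--L\"uck \cite{BL12} for the $L$-theoretic statement and Wegner \cite{W12} for the $K$-theoretic statement, both for groups acting geometrically on a CAT(0)-space; since their arguments verify the relevant strong axioms, the additive-coefficient version is immediate. To obtain the finite-wreath-product version, I would observe that $G\wr F$ is again a CAT(0)-group whenever $G$ is: if $G$ acts geometrically on a CAT(0)-space $X$, then $G\wr F=\bigl(\prod_F G\bigr)\rtimes F$ acts geometrically on the finite product $X^{|F|}$, with $\prod_F G$ acting coordinatewise and $F$ permuting the factors by isometries. For (2) I would cite Wegner \cite{W13} directly; here the only thing to note is that a finite wreath product of a virtually solvable group is again virtually solvable (a finite direct power of a finite-index solvable subgroup is solvable and of finite index in $G\wr F$), so (2) also holds in the wreath-product form.

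Items (3) and (4) are standard inheritance properties of the Farrell-Jones Conjecture with additive coefficients, for which I would refer to \cite{LR04}; the additional wreath-product strengthening in the definition of $\mathfrak{C}$ is precisely what makes the class closed under passing to finite-index \emph{over}groups, via the Kaloujnine--Krasner embedding $G\hookrightarrow U\wr\mathrm{Sym}([G:U])$ coming from the action on the cosets of a finite-index subgroup $U$. For the homomorphism criterion (5) I would argue as follows. Pulling the family of virtually cyclic subgroups back along $f$ and applying the transitivity principle reduces the claim $G\in\mathfrak{C}$, by subgroup closure, to showing that $f^{-1}(V)\in\mathfrak{C}$ for every virtually cyclic $V\leq H$. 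If $V$ is infinite it contains an infinite cyclic subgroup $C$ of finite index, so $f^{-1}(C)$ has finite index in $f^{-1}(V)$; if $V$ is finite the same holds with $C$ trivial. In either case $f^{-1}(C)$ is the preimage of a torsion-free cyclic subgroup, hence lies in $\mathfrak{C}$ by hypothesis, and finite-index-overgroup closure then yields $f^{-1}(V)\in\mathfrak{C}$.

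The bulk of the difficulty is thus not in any single argument but in the bookkeeping: one must check that the geometric theorems of \cite{BL12}, \cite{W12} and \cite{W13} are genuinely available with additive coefficients, in both $K$- and $L$-theory, and in the finite-wreath-product form. I expect the most delicate formal point to be the reduction in (5) from preimages of arbitrary virtually cyclic subgroups to preimages of torsion-free cyclic ones, which is exactly what forces the class $\mathfrak{C}$ to be defined with finite wreath products rather than with the plain conjecture.
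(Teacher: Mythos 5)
Your proposal is correct and follows essentially the same route as the paper: the paper likewise deduces item (1) from the observation that the wreath product of a CAT(0)-group with a finite group is again a CAT(0)-group, cites Wegner's theorem for item (2), and delegates items (3)--(5) to the known inheritance properties of the plain (non-wreath) conjecture, exactly the arguments you sketch via subgroup closure, colimits, the transitivity principle, and finite-index-overgroup closure. The only difference is one of detail: the paper simply cites \cite{W13} (Propositions~2.17 and~2.22) for the formal inheritance statements, whereas you spell out the underlying Kaloujnine--Krasner and virtually-cyclic-to-torsion-free-cyclic reductions, which is precisely the content of those cited results.
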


\begin{proof}
For CAT(0)-groups, the version with finite wreath products follows from the version without wreath products, since the wreath product of a CAT(0)-group with a finite group is again a CAT(0)-group. 
The wreath product version for virtually solvable groups has been shown in \cite[Theorem~1.1]{W13}.

The inheritance properties of the version with finite wreath products follow easily from the inheritance properties of the version without wreath products; see for example \cite[Proposition~2.17]{W13} and \cite[Proposition~2.22]{W13}.
\end{proof}

We will individually refer to each property above as ``property ($\ast$)''.

\begin{corollary}\label{corollary: rational-by-free groups} If $G$ is a solvable group and $H\in\mathfrak{C}$ then any semidirect product $G\rtimes H$ lies in $\mathfrak{C}$.
\end{corollary}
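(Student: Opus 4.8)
The plan is to apply the fibering criterion, property~(\ref{inheritance: homomorphism}), to the canonical projection $p\colon G\rtimes H\to H$ whose kernel is the normal subgroup $G$. Since $H$ lies in $\mathfrak{C}$ by hypothesis, the only thing left to verify is that $p^{-1}(C)$ lies in $\mathfrak{C}$ for every torsion-free cyclic subgroup $C\leq H$.

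So I would fix such a $C$. Being torsion-free and cyclic, $C$ is either trivial or infinite cyclic; in particular $C$ is solvable. Restricting $p$ yields a short exact sequence $1\to G\to p^{-1}(C)\to C\to 1$, which exhibits $p^{-1}(C)$ as an extension of the solvable group $C$ by the solvable group $G$. Since an extension of a solvable group by a solvable group is again solvable (the derived series of the total group eventually lands in the normal subgroup and then terminates), the group $p^{-1}(C)$ is solvable. Hence $p^{-1}(C)$ lies in $\mathfrak{C}$ by property~(\ref{inheritance: solvable}).

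With the hypotheses of property~(\ref{inheritance: homomorphism}) verified, I would conclude that $G\rtimes H$ lies in $\mathfrak{C}$. There is no genuine obstacle here: the only point deserving attention is the observation that the preimage of a cyclic subgroup of $H$ is itself solvable, which is precisely why solvability of $G$ — rather than mere membership of $G$ in $\mathfrak{C}$ — is the correct hypothesis, since property~(\ref{inheritance: solvable}) then applies directly to $p^{-1}(C)$. Finally, note that the argument is insensitive to the particular action $H\to\mathrm{Aut}(G)$ defining the semidirect product, as it uses only the normality of $G$ and the solvability of the resulting extensions.
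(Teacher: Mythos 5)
Your proposal is correct and follows exactly the paper's argument: apply the fibering criterion, property~(\ref{inheritance: homomorphism}), to the projection $G\rtimes H\to H$, and note that the preimage of any torsion-free cyclic subgroup $C\leq H$ is solvable (as a solvable-by-solvable extension), hence lies in $\mathfrak{C}$ by property~(\ref{inheritance: solvable}). The paper states this in one line; you have merely spelled out the details.
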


\begin{proof}
Consider the projection homomorphism $G\rtimes H\to H$. The claim follows from properties (\ref{inheritance: solvable}) and (\ref{inheritance: homomorphism}), as preimages of cyclic subgroups of $H$ are solvable.
\end{proof}

In particular, as finitely generated free groups are CAT(0)-groups, for any $m,n\in\mathbb{N}$ any semidirect product $\mathbb{Q}^m\rtimes F_n$ lies in $\mathfrak{C}$.

\section{Graphs of groups}

Given a connected graph $\Gamma$ (in the sense of Serre) and an oriented edge $e\in E(\Gamma)$, denote by $\iota(e)\in V(\Gamma)$ its initial and by $\tau(e)\in V(\Gamma)$ its terminal vertex. If by $\overline{e}$ we denote the edge $e$ with opposite orientation then $\iota(\overline{e})=\tau(e)$ and $\tau(\overline{e})=\iota(e)$. A \emph{graph of groups structure} $\mathcal{G}$ on $\Gamma$ consists of families of groups $(G_v)_{v\in V(\Gamma)}$ and $(G_e)_{e\in E(\Gamma)}$ satisfying $G_{\overline{e}}=G_e$ for all $e\in E(\Gamma)$ together with an injective group homomorphism $\alpha_e\colon G_e\hookrightarrow G_{\iota(e)}$ for each $e\in E(\Gamma)$. We call the pair $(\Gamma,\mathcal{G})$ a \emph{graph of groups}.

Given a maximal tree $T$ in $\Gamma$, let $\pi_1(\Gamma,\mathcal{G},T)$ be the group generated by the groups $G_v,v\in V(\Gamma)$ and the elements $e\in E(\Gamma)$ subject to the relations
\begin{itemize}
\item[(i)] $\overline{e}=e^{-1}$ for all $e\in E(\Gamma)$;
\item[(ii)] $e\cdot\alpha_{\overline{e}}(s)\cdot\overline{e}=\alpha_e(s)$ for all $e\in E(\Gamma)$ and $s\in G_e$;
\item[(iii)] $e=1$ if $e\in E(T)$.
\end{itemize}
We call $\pi_1(\Gamma,\mathcal{G},T)$ the \emph{fundamental group} of $(\Gamma,\mathcal{G})$ relative to $T$. For each $v\in V(\Gamma)$ the canonical map $G_v\to\pi_1(\Gamma,\mathcal{G},T)$ turns out to be injective \cite[Corollary~1.9]{B93}. The isomorphism type of $\pi_1(\Gamma,\mathcal{G},T)$ does not depend on the choice of $T$ \cite[Proposition~20]{S}, and we will often speak of \emph{the} fundamental group of $(\Gamma,\mathcal{G})$ and denote it by $\pi_1(\Gamma,\mathcal{G})$.

\begin{example}
Let $G=BS(p,q)=\langle x,t\ |\ tx^pt^{-1}=x^q \rangle$. Then $G$ is isomorphic to the fundamental group of a graph of groups with a single edge $e$ and vertex $v$, where $G_v=G_e=\langle x\rangle\cong\mathbb{Z}$ and $\alpha_{\overline{e}}=(x\mapsto x^p)$ and $\alpha_{e}=(x\mapsto x^q)$.
\end{example}

A \emph{subgraph of subgroups} of a graph of groups $(\Gamma,\mathcal{G})$ is a graph of groups $(\Gamma',\mathcal{G}')$ such that $\Gamma'\subseteq\Gamma$, for all $v\in V(\Gamma')$ and $e\in E(\Gamma')$ we have $G'_v\leq G_v$ and $G'_e\leq G_e$ respectively, and $\alpha'_e={\alpha_e}_{|G'_e}$ for all $e\in E(\Gamma')$. If $T'$ and $T$ are maximal trees in $\Gamma'$ and $\Gamma$ respectively such that $T'\subseteq T$ then there is a natural group homomorphism $\pi_1(\Gamma',\mathcal{G}',T')\to\pi_1(\Gamma,\mathcal{G},T)$ that maps for $v\in V(\Gamma')$ every $x\in G'_v$ to $x\in G_v$ and every $e\in E(\Gamma')$ to $e\in E(\Gamma)$.

\begin{lemma}
\label{lemma: fundamental group and colimit}
Let $(\Gamma_i,\mathcal{G}_i)_{i\in I}$ be a directed system of graphs of groups with binary relation $\subseteq$ where $(\Gamma_i,\mathcal{G}_i)\subseteq (\Gamma_j,\mathcal{G}_j)$ if $(\Gamma_i,\mathcal{G}_i)$ is a subgraph of subgroups of $(\Gamma_j,\mathcal{G}_j)$. Moreover, let $(T_i)_{i\in I}$ be a directed system of corresponding maximal trees, i.e. $T_i$ is a maximal tree in $\Gamma_i$ for all $i\in I$ and $T_i\subseteq T_j$ whenever $(\Gamma_i,\mathcal{G}_i)\subseteq (\Gamma_j,\mathcal{G}_j)$. Let
\begin{itemize}
\item $\Gamma=\bigcup_{i\in I}{\Gamma_i}$;
\item $\mathcal{G}=\bigcup_{i\in I}{\mathcal{G}_i}$ be the graph of groups structure on $\Gamma$ whose vertex and edge groups are the unions of the vertex and edge groups of the $\mathcal{G}_i$'s and where for $e\in E(\Gamma)$ and $s\in G_e$ we define $\alpha_e(s)$ by $(\alpha_e)_i(s)\in (G_{\iota(e)})_i\leq G_{\iota(e)}$ whenever $e\in E(\Gamma_i)$ and $s\in (G_e)_i$;
\item $T=\bigcup_{i\in I}{T_i}$ be our choice of a maximal tree in $\Gamma$.
\end{itemize}
Consider the directed system of fundamental groups of graphs of groups defined by the natural maps $\pi_1(\Gamma_i,\mathcal{G}_i,T_i)\to\pi_1(\Gamma_j,\mathcal{G}_j,T_j)$ whenever $(\Gamma_i,\mathcal{G}_i,T_i)\subseteq (\Gamma_j,\mathcal{G}_j,T_j)$. We have\[\pi_1(\Gamma,\mathcal{G},T)\cong\mathrm{colim}_{i\in I}\ {\pi_1(\Gamma_i,\mathcal{G}_i,T_i)}.\]
\end{lemma}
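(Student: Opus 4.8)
The plan is to compare the standard presentation of $\pi_1(\Gamma,\mathcal{G},T)$ with a presentation of the directed colimit and to produce mutually inverse homomorphisms between the two groups, checking that they are inverse on generators. Recall that $\pi_1(\Gamma,\mathcal{G},T)$ is generated by the elements of the groups $G_v$, $v\in V(\Gamma)$, together with the edges $e\in E(\Gamma)$, modulo relations (i)--(iii). Since $I$ is directed, the vertex and edge groups of $\mathcal{G}$ are directed unions, $G_v=\bigcup_i (G_v)_i$ and $G_e=\bigcup_i (G_e)_i$, and moreover $V(\Gamma)=\bigcup_i V(\Gamma_i)$, $E(\Gamma)=\bigcup_i E(\Gamma_i)$ and $T=\bigcup_i T_i$. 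The whole argument rests on the observation that every generator already lies in $\pi_1(\Gamma_i,\mathcal{G}_i,T_i)$ for a suitable index $i$, and that each of the relations (i)--(iii) involves only finitely many generators and hence already holds in $\pi_1(\Gamma_i,\mathcal{G}_i,T_i)$ for a suitable $i$.

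First I would construct the comparison map $\Phi\colon\mathrm{colim}_{i}\pi_1(\Gamma_i,\mathcal{G}_i,T_i)\to\pi_1(\Gamma,\mathcal{G},T)$. For each $i$, the fact that $(\Gamma_i,\mathcal{G}_i)$ is a subgraph of subgroups of $(\Gamma,\mathcal{G})$ with $T_i\subseteq T$ yields the natural homomorphism $\mu_i\colon\pi_1(\Gamma_i,\mathcal{G}_i,T_i)\to\pi_1(\Gamma,\mathcal{G},T)$ described before the lemma, sending $x\in (G_v)_i$ to $x\in G_v$ and $e$ to $e$. These maps are compatible with the structure maps of the directed system, so the universal property of the colimit produces $\Phi$. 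As every generator of $\pi_1(\Gamma,\mathcal{G},T)$, that is, an element $x\in G_v$ or an edge $e$, already lies in some $(G_v)_i$ or $E(\Gamma_i)$, it is hit by $\mu_i$, so $\Phi$ is surjective.

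The substance of the proof is the construction of an inverse $\Psi$ from the presentation of $\pi_1(\Gamma,\mathcal{G},T)$. I would set $\Psi(x)=\lambda_i(x)$ for $x\in G_v$, where $i$ is any index with $x\in(G_v)_i$ and $\lambda_i$ denotes the canonical map into the colimit, and $\Psi(e)=\lambda_j(e)$ for any $j$ with $e\in E(\Gamma_j)$. Independence of these choices follows from directedness: two indices admit a common upper bound $k$, in whose fundamental group the two candidate images already coincide, hence so do their images in the colimit. The task is then to check that relations (i)--(iii) are respected. Relations (i) and (iii) are immediate, the latter because $e\in E(T)$ forces $e\in E(T_i)$ for some $i$, where $e=1$ already holds. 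Relation (ii) is the only one that uses the precise definition of the limiting structure maps: given $s\in G_e$, choosing $i$ with $e\in E(\Gamma_i)$ and $s\in(G_e)_i$ simultaneously, the equality $e\cdot\alpha_{\overline{e}}(s)\cdot\overline{e}=\alpha_e(s)$ holds in $\pi_1(\Gamma_i,\mathcal{G}_i,T_i)$ precisely because $\alpha_e(s)=(\alpha_e)_i(s)$ by the way $\mathcal{G}$ was defined, and pushing forward along $\lambda_i$ gives the relation in the colimit.

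Finally I would verify $\Phi\circ\Psi=\mathrm{id}$ and $\Psi\circ\Phi=\mathrm{id}$ by evaluating on generators, which is immediate from the definitions. I do not expect a deep obstacle here: the result is an instance of the general principle that finitely presented objects commute with filtered colimits. The only points demanding care are the well-definedness of $\Psi$ under the directed unions and the simultaneous choice of an index realising all of the finitely many generators appearing in a given relation; this is where directedness of $I$ is used repeatedly, and where the compatibility conditions $T_i\subseteq T_j$ and $(\alpha_e)_i=(\alpha_e)_j|_{(G_e)_i}$ built into the hypotheses become essential.
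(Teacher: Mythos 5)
Your proof is correct and is in substance the same as the paper's: the paper disposes of the lemma in one line by asserting that $\pi_1(\Gamma,\mathcal{G},T)$ has the universal property of $\mathrm{colim}_{i\in I}\,\pi_1(\Gamma_i,\mathcal{G}_i,T_i)$, and your explicit construction of mutually inverse maps from the presentations is exactly the verification hiding behind that assertion. The essential ingredients you use---every generator and every relation of $\pi_1(\Gamma,\mathcal{G},T)$ already occurs at some finite stage $\pi_1(\Gamma_i,\mathcal{G}_i,T_i)$, with directedness of $I$ ensuring consistency of the choices---are precisely what makes the paper's ``easily follows from the definition'' true.
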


\begin{proof}
It easily follows from the definition of $(\Gamma,\mathcal{G},T)$ that $\pi_1(\Gamma,\mathcal{G},T)$ has the universal property of $\mathrm{colim}_{i\in I}\ {\pi_1(\Gamma_i,\mathcal{G}_i,T_i)}$, whence the claim.
\end{proof}

Given a graph of groups $(\Gamma,\mathcal{G})$, one can define a simplicial tree $X=\widetilde{(\Gamma,\mathcal{G})}$, the \emph{Bass-Serre covering} of $(\Gamma,\mathcal{G})$, and a continuous map $p\colon X\to\Gamma$ sending edges to edges such that the group $\pi_1(\Gamma,\mathcal{G})$ acts on $X$ by simplicial automorphisms without edge inversions and the stabilizer of $v\in V(X)$ is conjugate to the vertex group $G_{p(v)}\in\mathcal{G}$. Vice versa, by the fundamental theorem of Bass-Serre theory \cite[section~I.5.3]{S} any action of a group $G$ on a simplicial tree $T$ gives rise to a (generally non-canonical) graph of groups structure $\mathcal{G}$ on the quotient graph $G\backslash T$ such that $\pi_1(G\backslash T,\mathcal{G})\cong G$.

\begin{lemma}
\label{lemma: acts on tree with finite stabilizers}
If a group acts on a simplicial tree with finite point stabilizers then it lies in $\mathfrak{C}$.
\end{lemma}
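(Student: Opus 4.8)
The plan is to exhibit the group acting on the tree as a directed colimit of groups that are virtually free (indeed virtually finitely generated free), and then invoke the inheritance properties already established. The key structural input is Bass-Serre theory: a group $G$ acting on a simplicial tree $X$ without inversions gives rise to a graph of groups decomposition whose vertex and edge stabilizers are the point and edge stabilizers of the action. When all point stabilizers are finite, the edge stabilizers are finite as well, so $G$ is the fundamental group of a graph of \emph{finite} groups. Such a group is virtually free by a classical theorem (Karrass--Pietrowski--Solitar in the finitely generated case), and virtually free groups are virtually solvable-by-finite — in fact they are CAT(0) — so they lie in $\mathfrak{C}$ by property (\ref{inheritance: CAT(0)}).

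First I would reduce to the case in which the underlying graph $\Gamma = G\backslash X$ and all the stabilizers are finite by applying the fundamental theorem of Bass-Serre theory quoted in the excerpt, writing $G \cong \pi_1(\Gamma, \mathcal{G})$ with every $G_v$ finite. Next I would handle the finiteness issues: the graph $\Gamma$ and its vertex groups need not be finite, so one cannot directly cite the finitely generated virtually-free statement. Here the strategy is to write $(\Gamma, \mathcal{G})$ as a directed union of finite subgraphs of finite subgroups. Since each vertex and edge group is already finite, one may take $(\Gamma_i, \mathcal{G}_i)$ to range over the finite connected subgraphs of $\Gamma$ carrying the full (finite) vertex and edge groups; choosing compatible maximal trees $T_i \subseteq T$ makes this a directed system in the sense of Lemma~\ref{lemma: fundamental group and colimit}.

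Applying Lemma~\ref{lemma: fundamental group and colimit} then yields
\[
G \cong \pi_1(\Gamma, \mathcal{G}, T) \cong \operatorname*{colim}_{i \in I} \pi_1(\Gamma_i, \mathcal{G}_i, T_i).
\]
Each $\pi_1(\Gamma_i, \mathcal{G}_i, T_i)$ is the fundamental group of a \emph{finite} graph of \emph{finite} groups, hence is finitely generated and virtually free, hence a CAT(0)-group, and so lies in $\mathfrak{C}$ by property (\ref{inheritance: CAT(0)}). Since $\mathfrak{C}$ is closed under directed colimits by property (\ref{inheritance: colimits}), the colimit $G$ lies in $\mathfrak{C}$ as well.

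I expect the main obstacle to be the bookkeeping that makes the colimit decomposition legitimate rather than the conceptual content. Concretely, one must verify that the finite subgraphs carrying the given finite groups really do form a directed system with compatibly chosen maximal trees so that Lemma~\ref{lemma: fundamental group and colimit} applies, and one must cite correctly the fact that the fundamental group of a finite graph of finite groups is virtually free (so that property (\ref{inheritance: CAT(0)}) is available). If one prefers to avoid the virtually-free citation, an alternative is to note that a finite graph of finite groups acts cocompactly on the tree $X$ with finite stabilizers, hence acts properly and cocompactly on a CAT(0) space (the tree itself), giving a CAT(0)-group directly; this is arguably cleaner and sidesteps the need to invoke Karrass--Pietrowski--Solitar.
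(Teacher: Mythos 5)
Your proposal is correct and follows essentially the same route as the paper's own proof: pass to a graph of finite groups via Bass--Serre theory, exhaust it by finite subgraphs of (finite) subgroups and apply Lemma~\ref{lemma: fundamental group and colimit}, note that fundamental groups of finite graphs of finite groups are virtually finitely generated free and hence CAT(0)-groups lying in $\mathfrak{C}$ by property~(\ref{inheritance: CAT(0)}), and conclude by closure under directed colimits, property~(\ref{inheritance: colimits}). Your alternative justification of the CAT(0) claim (a proper, cocompact isometric action on a tree) is also fine, with the small correction that each $\pi_1(\Gamma_i,\mathcal{G}_i,T_i)$ should act on its \emph{own} Bass--Serre tree, not on the original tree $X$.
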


\begin{proof}
A group that acts on a simplicial tree with finite point stabilizers is isomorphic to the fundamental group of a graph of finite groups $(\Gamma,\mathcal{G})$. By Lemma~\ref{lemma: fundamental group and colimit}, the group $\pi_1(\Gamma,\mathcal{G})$ is isomorphic to the colimit of the directed system of fundamental groups associated to the directed system of finite subgraphs of subgroups of $(\Gamma,\mathcal{G})$. Fundamental groups of finite graphs of finite groups are CAT(0)-groups (in fact, they are virtually finitely generated free) and hence lie in $\mathfrak{C}$ by property~(\ref{inheritance: CAT(0)}). Therefore, $\pi_1(\Gamma,\mathcal{G})$ is isomorphic to the colimit of a directed system of groups that lie in $\mathfrak{C}$ and hence lies in $\mathfrak{C}$ by property~(\ref{inheritance: colimits}).
\end{proof}

A \emph{tree of groups} is a graph of groups whose underlying graph is a tree.

\begin{proposition}
\label{prop:FundTreeAbCat}
The fundamental group of a finite tree of finitely generated abelian groups $(T,\mathcal{G})$ is a CAT(0)-group.
\end{proposition}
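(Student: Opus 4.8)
The plan is to exhibit $G:=\pi_1(T,\mathcal{G})$ as a group acting properly and cocompactly by isometries on a CAT(0) space assembled as a tree of Euclidean spaces over the Bass--Serre tree of $(T,\mathcal{G})$. Since the underlying graph is a tree, every edge lies in the maximal tree, so $G$ is the iterated amalgam of the vertex groups along the edge groups and there are no stable letters. Each vertex group is finitely generated abelian, say $G_v\cong\mathbb{Z}^{n_v}\times F_v$ with $F_v$ finite, and acts properly and cocompactly on the Euclidean space $M_v:=\mathbb{R}^{n_v}$, with $\mathbb{Z}^{n_v}$ acting by translations and $F_v$ trivially. For an edge $e$ with $\iota(e)=v$, the image of $G_e$ in $G_v$ modulo torsion is a subgroup of $\mathbb{Z}^{n_v}$ of some rank $m_e$; its real span $V_{e,v}\subseteq M_v$ is a totally geodesic, hence convex, subspace on which $G_e$ acts by translations as a cocompact lattice. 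The canonical isomorphism between the torsion-free quotient of $G_e$ and each of these two lattices gives a preferred linear identification $V_{e,\iota(e)}\cong V_{e,\tau(e)}$.

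The crucial step, and the only place where the tree hypothesis is used, is to choose the flat metric on each $M_v$ so that for every edge $e$ the identification $V_{e,\iota(e)}\cong V_{e,\tau(e)}$ becomes an isometry. I would arrange this by propagating along $T$: fix an arbitrary inner product at a root vertex, and then process the remaining vertices in order of distance from the root. Each non-root vertex $w$ is reached through a unique edge, whose already-chosen metric prescribes an inner product on the subspace $V_{e,w}\subseteq\mathbb{R}^{n_w}$; since any inner product on a subspace extends to the ambient space, this determines an admissible metric on $M_w$. Because $T$ contains no cycles, every vertex is subjected to exactly one such constraint and no loop ever forces a compatibility condition, so the procedure produces a coherent family of flat metrics. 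This is exactly where trees are essential: around a cycle the composite identification need not be an isometry, so the metrics could not in general be matched.

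With these metrics fixed I would build the space $Y$ over the Bass--Serre tree $X$ by placing a copy of $M_{p(\tilde v)}$ over each $\tilde v\in V(X)$ and gluing the two vertex spaces adjacent to each edge along the now-isometric convex subspaces cut out by the corresponding edge group. The construction is $G$-equivariant, so $G$ acts on $Y$ by isometries; the action is cocompact, the quotient being the finite tree of flat tori over $T$, and it is proper, since the stabiliser of any point is the finite torsion subgroup of the relevant vertex group. Each individual gluing is performed along a complete convex subspace that is isometrically embedded on both sides, so Bridson--Haefliger's gluing theorem makes every finite sub-tree-of-spaces CAT(0); as $Y$ is the increasing union of these complete convex CAT(0) subspaces, it is itself CAT(0), whence $G$ is a CAT(0)-group.

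I expect the coherent choice of metrics to be the main obstacle; the remaining ingredients are standard CAT(0) gluing technology together with a routine colimit argument for the infinite Bass--Serre tree. A secondary point to handle with care is the torsion, which acts trivially on the Euclidean models: one must check, as above, that properness survives because point stabilisers stay finite, rather than insisting that the action be faithful.
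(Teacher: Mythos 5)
Your key step is exactly the paper's: fix an inner product at a root vertex and propagate it along the tree, pulling it back through each edge inclusion and extending it arbitrarily at the next vertex, the absence of cycles guaranteeing that no compatibility condition ever arises. The paper phrases this as an induction over an exhaustion $\{v_0\}=T_0\subset\dots\subset T_n=T$ by subtrees adding one vertex at a time, and then finishes each step with a single application of the Equivariant Gluing Theorem (Theorem~\ref{thm:eqgluing}), which delivers at once a \emph{complete} CAT(0) space, a proper and cocompact action of $\pi_1(T_i,\mathcal{G})$ on it, and equivariant isometric copies of the vertex spaces to glue along at the next step. You instead rebuild the gluing by hand as a wall-to-wall tree of spaces over the Bass--Serre tree, and it is exactly in this re-derivation that the gaps sit.

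Concretely: (i) properness does not follow from finiteness of point stabilizers --- the action of $\mathbb{Z}^2$ on $\mathbb{R}$ by $(m,n)\colon x\mapsto x+m+n\sqrt{2}$ is a free isometric action on a complete CAT(0) space that is not proper. In your space this is a genuine issue because you glue vertex spaces directly along walls with no edge cylinders in between: walls corresponding to different edges at the same vertex intersect each other, so a metric ball of bounded radius can meet pieces lying at arbitrarily large combinatorial distance in the Bass--Serre tree, and properness (and likewise (ii) completeness of $Y$) then requires a real argument. Your increasing-union argument only yields that $Y$ is CAT(0): an increasing union of complete convex subspaces need not be complete (consider $\bigcup_n[0,1-\nicefrac{1}{n}]$), and completeness is part of what is needed to invoke property~(\ref{inheritance: CAT(0)}) of Theorem~\ref{theorem: inheritance properties}. (iii) The gluing loci over the Bass--Serre tree are the coset translates $g\cdot V_{e,v}$, $g\in G_v$, and distinct Bass--Serre edges can demand the same wall (already for $\alpha_e(G_e)=2\mathbb{Z}\subset\mathbb{Z}=G_v$ both cosets give the wall $V_{e,v}=M_v$), so the gluing pattern and the equivariance of the wall identifications need more care than ``glue the two vertex spaces adjacent to each edge''. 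All three points are precisely what the citation of Theorem~\ref{thm:eqgluing} is there to handle --- the construction in \cite{BH} inserts cylinders $X_e\times[0,1]$, which makes combinatorial distance a lower bound for metric distance and renders properness and completeness inductive --- so with that substitution your argument becomes the paper's proof.
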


We will make use of the following theorem:

\begin{theorem}[Equivariant Gluing {\cite[II.11.18]{BH}}]\label{thm:eqgluing} Let $\Gamma_0$, $\Gamma_1$ and $H$ be groups acting properly
by isometries on complete CAT(0) spaces $X_0$, $X_1$ and $Y$ respectively. Suppose that for
$j = 0, 1$ there exists an injective group homomorphism $\varphi_j\colon H \rightarrow \Gamma_j$ and a $\varphi_j$-equivariant isometric
embedding $f_j\colon Y \rightarrow X_j$. Then\begin{enumerate}
\item the amalgamated free product $\Gamma = \Gamma_0 \ast_H \Gamma_1$ associated to the maps $\varphi_j$ acts properly by isometries on a complete CAT(0) space $X$;
\item if the given actions of $\Gamma_0$, $\Gamma_1$ and $H$ are cocompact, the action of $\Gamma$ on $X$ is cocompact.
\end{enumerate}
\end{theorem}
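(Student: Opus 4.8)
The plan is to induct on the number of edges of $T$, building the CAT(0) space by iterated application of the Equivariant Gluing Theorem (Theorem~\ref{thm:eqgluing}). The geometric model attached to each vertex and edge group is the following: for a finitely generated abelian group $A$, set $E_A=A\otimes_{\IZ}\IR$, a finite-dimensional real vector space, and let $A$ act on $E_A$ by translations through the natural map $A\to E_A$. The kernel of this map is the finite torsion subgroup and the image is a full lattice, so for \emph{any} Euclidean inner product on $E_A$ this action is proper, cocompact and by isometries, while $E_A$ is a complete CAT(0) space. This already settles the base case of a single vertex and shows that every finitely generated abelian group is a CAT(0)-group.

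For the edge maps, note that each injection $\alpha\colon G_e\hookrightarrow G_v$ induces, upon tensoring with the flat $\IZ$-module $\IR$, an injective linear map $E_{G_e}\to E_{G_v}$ that is $\alpha$-equivariant for the translation actions. To run the gluing I must choose the inner products on the spaces $E_{G_v}$ so that each such edge map is an isometric embedding; this is the one place where the tree hypothesis enters. I would root $T$, put an arbitrary inner product on the root space, and propagate outward: each non-root vertex is reached by a unique incoming edge, so I can pull back the already-chosen metric along $E_{G_e}\hookrightarrow E_{G_v}$ and then choose any inner product on the new vertex space for which this inclusion is isometric. Because every vertex has a unique incoming edge, no conflicting constraints arise, and all edge inclusions become isometric embeddings for a single consistent family of metrics.

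I would then prove the following strengthened statement by induction on the number of edges: with these metrics fixed, $\pi_1(T,\calg)$ acts properly, cocompactly and by isometries on a complete CAT(0) space $X$ into which each vertex space $E_{G_v}$ embeds isometrically and equivariantly with respect to $G_v\hookrightarrow\pi_1(T,\calg)$. For the inductive step, remove a leaf $v$ together with its edge $e$ to a vertex $w$, so that $\pi_1(T,\calg)\cong\pi_1(T',\calg')\ast_{G_e}G_v$. By the strengthened inductive hypothesis $\pi_1(T',\calg')$ acts on a complete CAT(0) space $X_0$ containing an isometric equivariant copy of $E_{G_w}$; I apply Theorem~\ref{thm:eqgluing} with $H=G_e$ acting on $Y=E_{G_e}$, with $X_1=E_{G_v}$, with $f_1\colon E_{G_e}\to E_{G_v}$ the edge inclusion, and with $f_0\colon E_{G_e}\to E_{G_w}\hookrightarrow X_0$. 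Both $f_0$ and $f_1$ are isometric and equivariant by the metric consistency above, all three actions are proper and cocompact, and the theorem produces a complete CAT(0) space $X$ carrying a proper cocompact isometric action of $\pi_1(T,\calg)$. Since the gluing construction embeds $X_0$ and $X_1$ isometrically into $X$, the copies of the vertex spaces of $T'$ together with the new copy of $E_{G_v}$ persist isometrically and equivariantly in $X$, so the strengthened hypothesis is maintained.

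The main obstacle is the compatibility of the metrics: the Equivariant Gluing Theorem requires the single space $Y$ attached to an edge group to embed isometrically into both adjacent vertex spaces at once, which forces the metric that $E_{G_e}$ inherits from $E_{G_v}$ to agree with the one it inherits from $E_{G_w}$. Arranging this globally would be impossible in the presence of cycles, but the tree structure reduces it to a one-directional propagation away from the root with no conflicting constraints, which is exactly what makes the argument close. The secondary point needing care is the bookkeeping of the isometric embeddings of the vertex spaces through each gluing, and it is for this reason that I strengthen the inductive statement rather than merely asserting at each stage that the group is a CAT(0)-group.
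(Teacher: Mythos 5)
Your text is not a proof of the statement at hand. The statement is the Equivariant Gluing Theorem itself (Theorem~\ref{thm:eqgluing}), and your very first sentence invokes Theorem~\ref{thm:eqgluing} as the engine of your induction; as a proof of that theorem this is circular. Nothing in your proposal addresses the actual content of the statement: why the amalgam $\Gamma=\Gamma_0\ast_H\Gamma_1$ acts on some complete CAT(0) space at all, why the action is proper, or why cocompactness of the three given actions forces cocompactness of the glued action. A genuine proof --- the one in \cite[II.11.18]{BH}, which the paper cites rather than reproves --- builds a tree of spaces fibered over the Bass--Serre tree of the amalgam, gluing coset-indexed copies of $X_0$ and $X_1$ along copies of $Y$ (thickened by an interval), verifies that the result is complete and CAT(0) via Reshetnyak-type gluing arguments, and then checks properness and cocompactness of the induced $\Gamma$-action directly. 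None of these ingredients appears in your write-up, so as a proof of the stated theorem there is a total gap.

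What you have actually written is a proof of Proposition~\ref{prop:FundTreeAbCat} (the fundamental group of a finite tree of finitely generated abelian groups is a CAT(0)-group), and as such it is correct and essentially identical to the paper's own argument for that proposition: the same models $X_v=G_v\otimes_\IZ\IR$ with the translation actions, the same one-directional propagation of inner products along the tree (the paper exhausts $T$ by subtrees $T_0\subset\cdots\subset T_n$ adding one vertex at a time, which is the same mechanism as your rooted-tree propagation, and likewise uses the tree structure to avoid the conflicting metric constraints that cycles would create), and the same strengthened inductive statement that the vertex spaces persist as equivariant isometric copies in the glued space --- a point the paper also flags explicitly, noting it is clear from the construction in \cite{BH}. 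So the mathematics you produced is sound, but it proves a downstream application while assuming the very theorem you were asked to prove.
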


We also need that the spaces $X_0$ and $X_1$ embed equivariantly and isometrically into $X$. However, this is clear from the construction given in the proof of the Equivariant Gluing Theorem in \cite{BH}.

\begin{proof}[Proof of Proposition~\ref{prop:FundTreeAbCat}]

Define for $v\in V(T)$ and $e\in E(T)$ the $\mathbb{R}$-vector spaces $X_v=G_v\otimes_\mathbb{Z}\mathbb{R}$ and $X_e=G_e\otimes_\mathbb{Z}\mathbb{R}$ respectively. The induced action of $G_v$ on $X_v$ given by
\[G_v\times X_v\to X_v,\ (g,x)\mapsto (g\!\otimes\! 1)+x\]
is proper and cocompact, and we analogously obtain a proper and cocompact action of $G_e$ on $X_e$. Let $v_0\in V(T)$ and exhaust the finite tree $T$ by subtrees $\left\{v_0\right\}=T_0\subset \ldots\subset T_n=T$ such that for all $i=1,\ldots,n$ the tree $T_i$ has one more vertex $v_i$ than $T_{i-1}$. We will denote the graph of groups structure on $T_i$ obtained by restricting $\mathcal{G}$ to $T_i\subseteq T$ also by $\mathcal{G}$. For each $i=1,\ldots,n$  denote by $e_i$ the unique oriented edge of $T_i$ for which $\iota(e_i)\in V(T_{i-1})$ and $\tau(e_i)=v_i$.

Choose an inner product on the finite-dimensional $\mathbb{R}$-vector space $X_{v_0}$ and thereby endow it with a complete CAT(0) metric. Independent of this choice, $G_{v_0}$ acts on $X_{v_0}$ by isometries. We inductively construct for each $i=1,\ldots,n$ inner products on $X_{e_i}$ and $X_{v_i}$ such that the $\alpha_{e_i}$-equivariant respectively $\alpha_{\overline{e_i}}$-equivariant embeddings $X_{\iota(e_i)}\hookleftarrow X_{e_i}\hookrightarrow X_{v_i}$ induced by the injective edge homomorphisms $\pi_1(T_{i-1},\mathcal{G})\geq G_{\iota(e_i)}\stackrel{\alpha_{e_i}}{\longleftarrow}G_{e_i}\stackrel{\alpha_{\overline{e_i}}}{\longrightarrow} G_{v_i}$ are isometric. In order to do so, pull back the inner product on $X_{\iota(e_i)}$ to obtain an inner product on $X_{e_i}$. Then, choose any inner product on $X_{v_i}$ that extends the inner product on $X_{e_i}\hookrightarrow X_{v_i}$.

By applying Theorem \ref{thm:eqgluing} repeatedly, we construct for $i=1,\ldots,n$ a complete CAT(0) space $X_{T_i}$ on which $\pi_1(T_i,\mathcal{G})$ acts properly and cocompactly by isometries, and into which for $j\le i$ each $X_{v_j}$ embeds equivariantly and isometrically.
\end{proof}

\begin{remark} Free products with amalgamation of virtually finitely generated abelian groups need not be CAT(0)-groups; a counterexample can be found in \cite[III.$\Gamma$.6.13]{BH}. However, if the amalgam is virtually cyclic then the vertex groups can be arbitrary CAT(0)-groups \cite[Corollary~II.11.19]{BH}.
\end{remark}

\begin{corollary}
\label{corollary: tree of groups}
The fundamental group of a tree of finitely generated abelian groups lies in $\mathfrak{C}$.
\end{corollary}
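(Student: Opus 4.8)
The plan is to upgrade Proposition~\ref{prop:FundTreeAbCat} from finite trees to arbitrary trees using the colimit machinery already assembled in the excerpt. The proposition gives us that the fundamental group of a \emph{finite} tree of \emph{finitely generated} abelian groups is a CAT(0)-group, and hence lies in $\mathfrak{C}$ by property~(\ref{inheritance: CAT(0)}). The remaining task is to remove both finiteness hypotheses: the tree may be infinite, and the abelian vertex and edge groups may fail to be finitely generated. Since $\mathfrak{C}$ is closed under directed colimits by property~(\ref{inheritance: colimits}), the natural strategy is to exhibit $\pi_1(T,\mathcal{G})$ as a directed colimit of fundamental groups to which the proposition applies, and then invoke Lemma~\ref{lemma: fundamental group and colimit} to identify that colimit.

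First I would handle the two sources of non-finiteness simultaneously by defining a directed system of \emph{finite} subtrees of \emph{finitely generated} subgroups. Concretely, index the system by pairs consisting of a finite subtree $T' \subseteq T$ together with, for each vertex $v$ and edge $e$ of $T'$, a finitely generated subgroup $G'_v \le G_v$ and $G'_e \le G_e$. The key compatibility requirement is that these chosen subgroups form a genuine subgraph of subgroups: one needs $\alpha_e(G'_e) \le G'_{\iota(e)}$ and $\alpha_{\overline e}(G'_e) \le G'_{\tau(e)}$ for every edge $e \in E(T')$, so that the edge homomorphisms restrict correctly. I would partially order these pairs by inclusion of subtrees together with inclusion of all the chosen subgroups, choosing compatible maximal trees (for a tree, the whole subtree is its own maximal tree, so $T_i \subseteq T_j$ is automatic from $T' \subseteq T''$) exactly as demanded by the hypotheses of Lemma~\ref{lemma: fundamental group and colimit}.

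The one point requiring a small argument is that this partially ordered set is genuinely \emph{directed} and that its union recovers all of $(T,\mathcal{G})$. Given two such finite subgraphs of finitely generated subgroups, their union of underlying subtrees spans a finite subtree of $T$ (connect the two pieces by the unique geodesic in $T$, which passes through finitely many additional vertices and edges), and on this larger subtree one enlarges the chosen subgroups to include the given generators while also closing up under the maps $\alpha_e, \alpha_{\overline e}$. Because each $\alpha_e$ sends a finite generating set into the finitely generated group $G_{\iota(e)}$, and the subtree is finite, this closure process terminates after finitely many steps and produces an upper bound that is again a finite subgraph of finitely generated subgroups. Directedness follows, and since every element of $G_v$ lies in some finitely generated subgroup and every vertex lies in some finite subtree, the union over the system is exactly $(T,\mathcal{G})$.

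With the directed system in place, each $\pi_1(T_i,\mathcal{G}_i)$ is the fundamental group of a finite tree of finitely generated abelian groups, hence a CAT(0)-group by Proposition~\ref{prop:FundTreeAbCat} and therefore in $\mathfrak{C}$ by property~(\ref{inheritance: CAT(0)}). Lemma~\ref{lemma: fundamental group and colimit} identifies $\pi_1(T,\mathcal{G})$ with the colimit of this directed system, and property~(\ref{inheritance: colimits}) then concludes that $\pi_1(T,\mathcal{G}) \in \mathfrak{C}$. I expect the main obstacle to be the bookkeeping in the directedness argument, specifically verifying that one can always close a finite family of finitely generated subgroups under the finitely many edge maps of a finite subtree while keeping everything finitely generated and keeping it a valid subgraph of subgroups; this is routine but is the only place where the interaction between the two finiteness conditions must be checked carefully.
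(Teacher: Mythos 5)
Your proof is correct and takes essentially the same route as the paper: exhaust $(T,\mathcal{G})$ by the directed system of its finite subtrees, apply Proposition~\ref{prop:FundTreeAbCat} and property~(\ref{inheritance: CAT(0)}) to each piece, and conclude via Lemma~\ref{lemma: fundamental group and colimit} and property~(\ref{inheritance: colimits}). The only difference is that the corollary's hypothesis already makes every vertex and edge group finitely generated, so your extra layer of bookkeeping with finitely generated subgroups and the associated directedness/closure argument is unnecessary --- the paper simply takes finite subgraphs carrying the full vertex and edge groups (the reduction from arbitrary abelian groups to finitely generated ones is instead carried out in the proof of the Main Theorem).
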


\begin{proof}
Any graph of groups can be exhausted by the directed system of its finite subgraphs of groups. The claim follows from Lemma~\ref{lemma: fundamental group and colimit}, Proposition~\ref{prop:FundTreeAbCat}, and properties (\ref{inheritance: CAT(0)}) and (\ref{inheritance: colimits}).
\end{proof} 

\section{Proof of the main theorem}

\begin{proof}[Proof of the Main Theorem]
We may assume that $(\Gamma,\mathcal{G})$ is a finite graph of finitely generated abelian groups; this follows from three consecutive applications of Lemma~\ref{lemma: fundamental group and colimit}:
\begin{enumerate}
\item Let $(\Gamma,\mathcal{G})$ be a finite graph of abelian groups with finitely generated edge groups. For every vertex $v\in V(\Gamma)$ there exists a finitely generated subgroup of $G_v$ that contains the images of all adjacent edge homomorphisms so that we can easily find a directed system of finite subgraphs of finitely generated subgroups of $(\Gamma,\mathcal{G})$ that exhausts $(\Gamma,\mathcal{G})$.
\item If $(\Gamma,\mathcal{G})$ is a finite graph of abelian groups, we can write every edge group as the directed colimit of its finitely generated subgroups and $\pi_1(\Gamma,\mathcal{G})$ as the directed colimit of fundamental groups of finite graphs of abelian groups with finitely generated edge groups.
\item Finally, any graph of (abelian) groups can be exhausted by the directed system of its finite subgraphs of (abelian) groups.
\end{enumerate}
Let $T$ be a maximal tree in $\Gamma$. We will construct a group homomorphism $\phi$ from $\pi_1(\Gamma,\mathcal{G},T)$ to a group of the form $\mathbb{Q}^m\rtimes F_n$, where $\mathbb{Q}^m\rtimes F_n$ lies in $\mathfrak{C}$ by Corollary~\ref{corollary: rational-by-free groups}. We then prove that $\pi_1(\Gamma,\mathcal{G},T)$ lies in $\mathfrak{C}$ by showing that all preimages of torsion-free cyclic subgroups of $\mathbb{Q}^m\rtimes F_n$ lie in $\mathfrak{C}$, i.e. by applying property~(\ref{inheritance: homomorphism}).

Let the vertex set of $\Gamma$ be given by $\left\{v_1,\ldots,v_k\right\}$ and define $Q$ as the $\mathbb{Q}$-vector space \[Q=\bigoplus_{i=1}^{k}\left(G_{v_i}\otimes_\mathbb{Z}\mathbb{Q}\right).\] For every $e\in E(\Gamma)$ the injective group homomorphism $\alpha_e\colon G_e\to G_{\iota(e)}$ gives rise to an injective $\mathbb{Q}$-linear homomorphism $M_e=\alpha_e\otimes_\mathbb{Z}id\colon G_e\otimes_\mathbb{Z}\mathbb{Q}\to G_{\iota(e)}\otimes_\mathbb{Z}\mathbb{Q}$. Define $R$ to be the $\mathbb{Q}$-subvector space of $Q$ spanned by the vectors \[M_{\overline{e}}(s\!\otimes\! 1) - M_e(s\!\otimes\! 1)\text{ for all }e\in E(T),\ s\in G_e.\]
For every vertex $v\in V(\Gamma)$ the rationalized vertex group $G_v\otimes_\mathbb{Z}\mathbb{Q}$ naturally embeds into $Q/R$, which can be seen as follows: Fix an orientation $\mathcal{O}(T)\subset E(T)$ for each edge of $T$ and suppose that this is not the case, i.e. we can find an element $0\neq q\in G_v\otimes_\mathbb{Z}\mathbb{Q}$ and for every $e\in\mathcal{O}(T)$ an element $q_e\in G_e\otimes_\mathbb{Z}\mathbb{Q}$ such that
\begin{equation}
q = \sum_{e \in\mathcal{O}(T)}(M_{\overline{e}}(q_e) - M_e(q_e))\in Q.
\label{eq:linearComb}
\end{equation}
Consider the subforest $F\subseteq T$ spanned by all edges for which $q_e\neq 0$. It contains at least one edge, as the right hand side of (\ref{eq:linearComb}) would otherwise be zero, contradicting that $q\neq 0$. Choose a leaf $w\in V(F)$, i.e. a vertex of valence 1, such that $w\neq v$ and let $e$ be the unique edge in $\mathcal{O}(T)\cap E(F)$ adjacent to $w$, say with $\iota(e)=w$. Since $q\in G_v\otimes_\mathbb{Z}\mathbb{Q}$ with $v\neq w$ and $e$ is the only edge in $\mathcal{O}(T)\cap E(F)$ adjacent to $w$, projecting (\ref{eq:linearComb}) to the factor $G_w\otimes_\mathbb{Z}\mathbb{Q}\leq Q$ gives rise to the equation \[0= -M_e(q_e).\]
However, this is a contradiction, as $q_e\neq 0$ and $M_e$ is injective.

Let $\left\{e_1,\ldots,e_n\right\}$ be the set of edges of $\Gamma\smallsetminus T$ and denote by $F_n$ the free group with basis $\left\{e_1,\ldots,e_n\right\}$. We obtain a linear representation $\rho\colon F_n\to GL(Q/R)$ by extending for every $e_i$ the isomorphism of subspaces $M_{e_i}\circ {M_{\overline{e_i}}}^{-1}\colon \mathrm{im}(M_{\overline{e_i}})\stackrel{\cong}{\to} \mathrm{im}(M_{e_i})$ to an automorphism of the finite-dimensional $\mathbb{Q}$-vector space $Q/R$. Define a group homomorphism $$\phi\colon\pi_1(\Gamma,\mathcal{G},T)\to (Q/R)\rtimes_\rho F_n$$ by mapping for $v\in V(\Gamma)$ any element $x\in G_v$ to $(x\!\otimes\! 1,1)$ and $e\in E(\Gamma\smallsetminus T)$ to $(0\!\otimes\! 0,e)$. This assignment is well-defined: Suppose that $x\in G_v$ lies in the image of $\alpha_e$ for some edge $e\in E(\Gamma)$ with $\iota(e)=v$, i.e. $x=\alpha_e(s)$ for some $s\in G_e$. Then $\phi(x)=\phi(\alpha_e(s))=(\alpha_{e}(s)\!\otimes\! 1,1)$. By definition, in $\pi_1(\Gamma,\mathcal{G},T)$ we have that
\[x=\begin{cases}\alpha_{\overline{e}}(s) & \text{if }e\in E(T),\text{ and}\\
e\cdot\alpha_{\overline{e}}(s)\cdot\overline{e} & \text{if }e\in E(\Gamma\smallsetminus T).
\end{cases}\]
In the first case, $\phi(\alpha_{\overline{e}}(s))=(\alpha_{\overline{e}}(s)\!\otimes\! 1,1)$, where $\alpha_{\overline{e}}(s)\!\otimes\! 1=\alpha_e(s)\!\otimes\! 1$ in $Q/R$ and hence $\phi(\alpha_{\overline{e}}(s))=\phi(x)$. In the second case,
\begin{align*}
\phi(e\cdot\alpha_{\overline{e}}(s)\cdot\overline{e}) &= (0\!\otimes\! 0,e)\cdot (\alpha_{\overline{e}}(s)\!\otimes\! 1,1)\cdot (0\!\otimes\! 0,\overline{e})\\
&= (0\!\otimes\! 0 + \rho(e)(\alpha_{\overline{e}}(s)\!\otimes\! 1) + 0\!\otimes\! 0,e\overline{e})\\
&= (M_{e}({M_{\overline{e}}}^{-1}(\alpha_{\overline{e}}(s)\!\otimes\! 1)),e\overline{e})\\
&= (\alpha_e(s)\!\otimes\! 1,1)=\phi(x)
\end{align*}
whence $\phi$ is well-defined. Recall that $(Q/R)\rtimes_\rho F_n$ lies in $\mathfrak{C}$ by Corollary~\ref{corollary: rational-by-free groups}, $Q/R$ being isomorphic to $\mathbb{Q}^m$ for some $m\in\mathbb{N}$. 

Let $C$ be a torsion-free cyclic subgroup of $(Q/R)\rtimes_\rho F_n$ and first assume that $C$ is not contained in $(Q/R)\rtimes\left\{1\right\}$. Consider the induced subgroup action of $\phi^{-1}(C)\leq\pi_1(\Gamma,\mathcal{G},T)$ on the Bass-Serre covering tree $X=\widetilde{(\Gamma,\mathcal{G})}$ and recall that every point stabilizer of this action is contained in a conjugate of some vertex group $G_v,\ v\in V(\Gamma)$. For each vertex $v\in V(\Gamma)$ the kernel of the natural map $G_v\to G_v\otimes_\mathbb{Z}\mathbb{Q}$ equals the torsion subgroup of $G_v$, and $G_v\otimes_\mathbb{Z}\mathbb{Q}$ embeds into $Q/R$ and hence into the normal subgroup $(Q/R)\rtimes\left\{1\right\}$. Consequently, $\phi^{-1}(C)$ contains of every point stabilizer only its torsion subgroup and acts on $X$ with finite point stabilizers. We conclude that $C$ lies in $\mathfrak{C}$ by Lemma~\ref{lemma: acts on tree with finite stabilizers}.

On the other hand, if $C$ is contained in $(Q/R)\rtimes\left\{1\right\}$, consider the composition of group homomorphisms\[\Phi\colon \pi_1(\Gamma,\mathcal{G},T)\stackrel{\phi}{\longrightarrow}(Q/R)\rtimes_\rho F_n\longrightarrow F_n\] where the second homomorphism is given by projection to the second factor. The preimage $\phi^{-1}(C)$ is a subgroup of $\ker(\Phi)$, whence, by property~(\ref{inheritance: subgroups}), in order to show that $\phi^{-1}(C)$ lies in $\mathfrak{C}$ it suffices to show that $\ker(\Phi)$ lies in $\mathfrak{C}$. We claim that $\ker(\Phi)$ is isomorphic to the fundamental group of a tree of finitely generated abelian groups and hence lies in $\mathfrak{C}$ by Corollary~\ref{corollary: tree of groups}. Equivalently, we claim that $\ker(\Phi)$ acts on a tree with finitely generated abelian point stabilizers and contractible quotient. Since $\pi_1(\Gamma,\mathcal{G},T)$ acts on $X$ with finitely generated abelian point stabilizers, it suffices to show that the quotient $\ker(\Phi)\backslash X$ is a tree. Note that $\pi_1(\Gamma,\mathcal{G},T)/\ker(\Phi)\cong F_n$ and thus we obtain an induced action of the free group $F_n$ on $\ker(\Phi)\backslash X$. As every point stabilizers of the $\pi_1(\Gamma,\mathcal{G},T)$-action on $X$ is contained in $\ker(\Phi)$, the action of $F_n$ on $\ker(\Phi)\backslash X$ is free. We conclude that $\ker(\Phi)\backslash X$ is the universal covering space of the finite graph $F_n\backslash(\ker(\Phi)\backslash X)\cong\Gamma$ and therefore a tree.
\end{proof}

\providecommand{\bysame}{\leavevmode\hbox to3em{\hrulefill}\thinspace}


\begin{thebibliography}{1}

\bibitem{BFL11}
Arthur Bartels, Tom Farrell, and Wolfgang L{\"u}ck, \emph{The {Farrell-Jones}
  {Conjecture} for cocompact lattices in virtually connected lie groups}, arXiv
  preprint arXiv:1101.0469 (2011).

\bibitem{BL12}
Arthur Bartels and Wolfgang L{\"u}ck, \emph{The {Borel} {Conjecture} for
  hyperbolic and {CAT}(0)-groups}, Ann. of Math. (2) \textbf{175} (2012),
  no.~2, 631--689.

\bibitem{B93}
Hyman Bass, \emph{Covering theory for graphs of groups}, Journal of pure and
  applied Algebra \textbf{89} (1993), no.~1, 3--47.

\bibitem{BH}
Martin~R Bridson and Andr{\'e} Haefliger, \emph{Metric spaces of non-positive
  curvature}, vol. 319, Springer, 1999.

\bibitem{FW13}
Tom Farrell and Xiaolei Wu, \emph{{Farrell-Jones} {Conjecture} for the solvable
  {Baumslag-Solitar} groups}, arXiv preprint arXiv:1304.4779 (2013).

\bibitem{LR04}
Wolfgang L{\"u}ck and Holger Reich, \emph{The {Baum-Connes} and the
  {Farrell-Jones} conjectures in {K}- and {L}-theory}, Handbook of K-theory
  \textbf{2} (2005), 703--842.

\bibitem{S}
Jean-Pierre Serre, \emph{Trees}, Springer {V}erlag, {Berlin}, {New York}, 1980.

\bibitem{W12}
Christian Wegner, \emph{The {K}-theoretic {Farrell-Jones} {Conjecture} for
  {CAT}(0)-groups}, Proceedings of the American Mathematical Society
  \textbf{140} (2012), no.~3, 779--793.

\bibitem{W13}
\bysame, \emph{The {Farrell-Jones} {Conjecture} for virtually solvable groups},
  arXiv preprint arXiv:1308.2432 (2013).

\end{thebibliography}
\end{document}